\def\imod#1{\allowbreak\mkern10mu({\operator@font mod}\,\,#1)}
\newtheorem{theorem}{Theorem}[section]
\newtheorem{lemma}{Lemma}[section]
\newtheorem{corollary}{Corollary}[section]
\theoremstyle{definition}
\newtheorem{definition}{Definition}[section]
\title{A New Lower Bound for van der Waerden Numbers\footnote{\textcopyright This manuscript version is made available under the CC-BY-NC-ND 4.0 license \indent\indent http://creativecommons.org/licenses/by-nc-nd/4.0/}}
\author{Thomas Blankenship\thanks{Dept.\ of Mathematics and Statistics, Sacramento State University, {\tt thomasblankensh@csus.edu}} \and 
Jay Cummings\thanks{Dept.\ of Mathematics and Statistics, Sacramento State University, {\tt jay.cummings@csus.edu}.} \and 
Vladislav Taranchuk\thanks{Dept.\ of Mathematics and Statistics, Sacramento State University, {\tt vtaranchuk@csus.edu}} \and 
}
\begin{document}
\maketitle

\begin{abstract}
In this paper we prove a new recurrence relation on the van der Waerden numbers, $w(r,k)$. In particular, if $p$ is a prime and $p\leq k$ then $w(r, k) > p \cdot \left(w\left(r - \left\lceil \frac{r}{p}\right\rceil, k\right) -1\right)$. This recurrence gives the lower bound $w(r, p+1) > p^{r-1}2^p$ when $r \leq p$, which generalizes Berlekamp's theorem on 2-colorings, and gives the best known bound for a large interval of $r$.  The recurrence can also be used to construct explicit valid colorings, and it improves known lower bounds on small van der Waerden numbers.
\end{abstract}


\section{Introduction and History}
In 1927, van der Waerden proved that for any positive integers $r$ and $k$ there exists an $N = w(r,k)$ such that every $r$-coloring of $\{1,2,3,\dots,N\}$ contains a monochromatic arithmetic progression of length $k$.  As a central function in Ramsey theory and a notoriously difficult one to understand, the growth rate of $w(r,k)$ has received much attention.

Van der Waerden's initial proof gives a monstrous upper bound.  In the slowest-growing case, when $r=2$, still the bound is $w(2,k) \leq A(n)$, where $A(n)$ is the Ackerman function.  The best known general upper bound is due to Gowers \cite{Gowers}, who proved
$$
w(r, k) \leq 2^{2^{r^{2^{2^{k+9}}}}}.
$$
In \cite{GrahamSol} Graham and Solymosi improved this in the case when $k=3$, 
which in a series of follow-up papers by Bourgain \cite{Bourgain}, Sanders \cite{sanders} and Bloom \cite{bloom} further improved the upper bound to
$$w(r,3) \leq 2^{cr(\ln r)^4}$$
where $c > 0$ is an absolute constant.  Graham currently offers 1,000 USD for an answer as to whether or not $w(2,k) < 2^{k^2}$.

In 1953, Erd\H{o}s and and Rado \cite{Erdos} proved the lower bound
$$\sqrt{2(k-1)r^{k-1}} \leq w(r,k)$$
using a simple counting argument.  In 1960, Moser \cite{Moser} used a constructive approach to improve this bound in the case that $r$ is large relative to $k$.  In particular, he showed that
$$(k-1)r^{C\ln(r)} < w(r,k)$$
for some absolute constant $C$.  Two years later Schmidt \cite{Schmidt} used a nonconstructive approach to prove a bound that is asymptotically better in $k$.  He showed that that there is some absolute constant $c$ for which
$$r^{k - c\sqrt{k \ln(k)}} \leq w(r,k).$$

In 1968, Berlekamp used an algebraic approach to construct what is still the best known lower bound for the case when $k=p+1$, where $p$ is a prime, and $r=2$.  He showed that
$$p2^p < w(2,p+1).$$

In this paper we use a construction to generalize this result to the following.

\noindent{\bf Theorem \ref{mainthemsbound}.} \ 
If $p$ is any prime with $2 \leq r \leq p \leq k$, then
$$
p^{r-1}2^p < w(r, p+1).
$$
This generalizes Berlekamp's theorem \cite{BKamp}.



In 1973, Erd\H{o}s and Lov\'{a}sz \cite{ErdosLovasz} used the Lov\'{a}sz Local Lemma on hypergraphs to show that
$$\frac{r^{k-1}}{4k}\left(1 - \frac{1}{k}\right) \leq w(r,k).$$

In this paper we will use a recurrence to generalize Berlekamp's result to arbitrary number of colors.  Our work will also improve bounds on small van der Waerden numbers.  Finally, our bound is recursively-constructive, in that an explicit coloring when $r=2$ can be used create explicit colorings for larger $r$.

The current best known general lower bound of this type is due to Kozik and Shabanov \cite{shab}, who in 2016 proved
$$c \cdot r^{k-1} \leq w(r,k)$$
for some absolute constant $c > 0$.

For large $r \gg k$, the best result, by O'Bryant, can be obtained by using the Hypergraph Symmetry Theorem and the Behrend-type results about sets of integers without long progressions (see \cite{Obryant}):
$$w(r, k) >e^{f(k) (\ln r)^{\lceil\log_2k\rceil}}$$
where $f(k)$ is a function of k. The above bound can be found in \cite{brown} and is best known for large $r \gg k$.

There are now constructive approaches to the Lov\'{a}sz Local Lemma (see \cite{Gasarch}), which can be used to produce explicit constructions with high probability.  Therefore, in a sense, the above two bounds can also be considered constructive.

%
%
%
%
%

In the following section we establish a recursive lower bound for $w(r, k)$, which is used to deduce our main result.  In Section 4 we use this recurrence relation to improve known numerical lower bounds for some small values of $r$ and $k$.

\section{Proof of the Main Theorem}

\begin{definition}
Let $R_r$ represent the set of colors $\{ 1, 2, \dots, r \}$. For each $i \in R_r$, define $S_i(r, k)$ to be the $p$-tuple
$$S_i(r,k) = (i,i+1,i+2,\dots,r,1,2,3,\dots,r,1,2,\dots),$$
where $p$ is the largest prime such that $p \leq k$.
\end{definition}

For example,
\begin{align*}
S_1(5,11) &= (1,2,3,4,5,1,2,3,4,5,1)\\
S_2(5,11) &= (2,3,4,5,1,2,3,4,5,1,2)\\
S_3(5,11) &= (3,4,5,1,2,3,4,5,1,2,3)\\
S_4(5,11) &= (4,5,1,2,3,4,5,1,2,3,4)\\
S_5(5,11) &= (5,1,2,3,4,5,1,2,3,4,5).
\end{align*}

Note that, among these, only $S_1(5,11)$ has the property that $i$ is in a position congruent to $i$ (mod 5).

\begin{definition}  Here are some basic definitions.
\begin{itemize}
\item  A color $c$'s \emph{index positions} with a tuple $S_i(r,k)$ are the set of its positions within that tuple.
\item  The tuples $S_i(r,k)$ are sometimes called a \emph{block}.
\item  The acronym \emph{$k$-TMAP} is short for a $k$-term monochromatic arithmetic progression.
\item  If $x_1,x_2,\dots,x_k$ is an arithmetic progression, then the common value $x_{i+1}-x_i$ is called the \emph{common difference}.
\item  For a fixed $k$ and $r$, a \emph{valid} $r$-coloring of $\{1,2,3,\dots,n\}$ is one which contains no $k$-TMAP.
\end{itemize}
\end{definition}

Note that if you know the index positions of a color $c \in R_r$, then you can determine which of the $r$ distinct tuples $S_i(r, k)$ it is in.  For instance, if $c$ is in index positions congruent to 2 (mod $r$), then it must be in the tuple $S_{c-1}(r,k)$.  This works in general because within a fixed tuple all index positions for $c$ fall into the same residue class (mod $r$),
and among the $r$ different tuples $S_i(r, k)$, the color $c$ falls into each residue class exactly once.

\begin{lemma}\label{UnionSi}
Fix an $r$ and $k$ and let $p$ be the largest prime such that $p \leq k$.  We can choose $(r - \lceil \frac{r}{p} \rceil)$ distinct $S_i(r, k)$'s such that the concatenation $T$ of any ordering of these tuples has the property that any $k$-TMAP in $T$ has common difference divisible by $p$.
\end{lemma}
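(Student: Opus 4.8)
The plan is to fix, once and for all, which $m := r - \lceil r/p \rceil$ of the tuples $S_i(r,k)$ to use, and then to show that no concatenation of them can contain a $k$-TMAP whose common difference is coprime to $p$ (equivalently, not divisible by $p$, since $p$ is prime). The only structural fact I will lean on is this: in $S_i(r,k)$ the entry in coordinate $t$ is the color $\equiv i + (t-1) \pmod r$, so if a position $g$ of the concatenation $T$ falls inside a copy of $S_i$ then the color of $g$ is $\equiv i + (g \bmod p) \pmod r$ (each block has length $p$). In particular, the color of $g$ together with the value $g \bmod p$ pins down $i$ modulo $r$.

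For the choice of blocks, I would pick a set $C$ of $\lceil r/p \rceil$ residues modulo $r$ whose cyclic gaps are all at most $p$ --- for instance $C = \{\,0, p, 2p, \dots, (\lceil r/p\rceil - 1)p\,\}$ read modulo $r$; a one-line check shows these are distinct mod $r$ and that every gap, including the wrap-around gap $r - (\lceil r/p\rceil - 1)p$, is $\le p$. Let $I$ be the complement of $C$ among the residues modulo $r$, so $|I| = r - \lceil r/p\rceil = m$, and take the $m$ tuples $\{S_i : i \in I\}$. The property this buys is that $I$ contains no block of $p$ consecutive residues modulo $r$; equivalently, $C$ meets every arc of $p$ consecutive residues. (This also covers the degenerate regime $r \le p$, where $\lceil r/p\rceil = 1$ and the statement is simply $I \ne \mathbb{Z}/r\mathbb{Z}$, which holds since $|I| = m < r$.)

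Now for the main argument, suppose for contradiction that some ordering $T$ of these tuples contains a $k$-TMAP of color $c$ with common difference $d$ and $p \nmid d$. Because $p$ is prime, $\gcd(d,p) = 1$, so among the first $p$ terms $g_\ell := g_0 + \ell d$ for $0 \le \ell < p$ (they exist since $k \ge p$) the residues $g_\ell \bmod p$ run through all of $\{0, 1, \dots, p-1\}$. Each $g_\ell$ lies in some copy of $S_{i_\ell}$ with $i_\ell \in I$, and equating its color to $c$ forces $i_\ell \equiv c - (g_\ell \bmod p) \pmod r$. Hence the set $\{\,i_\ell \bmod r : 0 \le \ell < p\,\}$ equals $\{\,c - s : 0 \le s < p\,\}$, i.e.\ the arc $\{c, c-1, \dots, c-p+1\}$ of $p$ consecutive residues modulo $r$. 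But every $i_\ell$ lies in $I$, so $I$ contains this arc --- contradicting the defining property of $I$. Therefore every $k$-TMAP in $T$ has common difference divisible by $p$.

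The step I expect to be the real point (more a simplification, once seen, than an obstacle) is that the ordering of the tuples never enters the argument: we only ever use that each term of the progression sits in \emph{one of the chosen} $S_i$'s, not which one, so the conclusion holds uniformly over all orderings and over all sizes of $r$ relative to $p$. The only genuinely computational bits are the routine verification that the covering set $C$ has size $\lceil r/p\rceil$ with all gaps $\le p$, and keeping track of the $1$-indexing shift inside the $S_i$'s, which cancels in the congruence above.
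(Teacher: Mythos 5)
Your proof is correct and follows essentially the same route as the paper: you exclude the same blocks (the $S_i$ with $i \equiv 1 \pmod p$, up to an indexing shift) and run the same pigeonhole argument that a progression with common difference coprime to $p$ of a single color $c$ would force its terms into blocks whose indices sweep out $p$ consecutive residues mod $r$. The one improvement is that by isolating the exact property needed of the excluded index set --- that it meets every arc of $p$ consecutive residues mod $r$ --- you handle the cases $r\le p$ and $r>p$ in a single unified argument, whereas the paper treats them separately and only sketches the second.
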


\begin{proof}
This proof is best understood when split into two cases.


Case 1:  Assume $r \leq p$.  
Given any collection of the blocks
$$S_2(r,k), S_3(r,k), \dots, S_r(r,k),$$
each included with any multiplicity, let $T$ be the concatenation of any ordering of these blocks.  Suppose $(x_1,x_2, \dots, x_k)$ is a $k$-TMAP in $T$ of color $c$. If some $x_i$ and $x_j$ have difference
$x_j - x_i = d \equiv 0(\text{mod } p)$, then by the primality of $p$ we may conclude that the $k$-TMAP has common difference divisible by $p$, and hence we are done.

Therefore, we may now assume the common difference is not congruent to 0 (mod $p$), which implies that the index positions of $(x_1, x_2, \dots, x_k)$ must be distinct (mod $p$), and hence span every possible index position.  
However,  for this to be possible, some element would have to be in an index position congruent to $c$ (mod $p$), meaning it is inside of the tuple $S_1(r,k)$.  But since we initially excluded this tuple, this is a contradiction.

Case 2:  Assume $r > p$.  The construction of $T$ from Case 1 is no longer sufficient; by attempting to use that construction we could still conclude that a $k$-TMAP $(x_1,x_2, \dots, x_k)$ of some color $c$ and with common difference $d \not\equiv 0\ (\text{mod } p)$ must contain index positions from $p$ different residue classes (mod $p$).  However, since $p<r$, each tuple no longer contains every color.  Therefore removing one tuple does not guarantee that we have removed the color $c$, and if we did not remove this color then $c$ still appears in index positions in all residue classes.  If we remove $\lceil\frac{r}{p}\rceil$ block types, though, we can again guarantee that each color's index positions appear in at most $p-1$ conjugacy classes, preventing a monochromatic progression in that color.  Indeed, if we consider a concatenated tuple $T$ of all $S_i(r, k)$'s except for those of the form 
$S_{1+pm}(r, k)$ where $m \in \{ 0, 1, \dots , \lceil \frac{r}{p} \rceil -1 \}$, then this leaves behind ($r - \lceil \frac{r}{p} \rceil$) $S_i(r, k)$'s to build $T$ from, and with this the same argument follows as for Case 1.

%
%
\end{proof}

\begin{theorem}\label{MainThm}
We have the recurrence that, for $r \geq 2$,
$$w(r, k) > p\left(w\left(r - \left\lceil \frac{r}{p} \right\rceil, k\right) -1\right),$$
where $p$ is the largest prime such that $p \leq k$.
\end{theorem}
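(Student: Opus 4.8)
The plan is to build a valid $r$-coloring of a large interval by combining a valid $(r-\lceil r/p\rceil)$-coloring with the block structure from Lemma~\ref{UnionSi}. Set $m = w(r-\lceil r/p\rceil, k) - 1$, so there exists a valid $(r-\lceil r/p\rceil)$-coloring $\chi$ of $\{1,2,\dots,m\}$ using the colors of $R_{r-\lceil r/p\rceil}$ and containing no $k$-TMAP. I would then construct a coloring of $\{1,2,\dots,pm\}$ as follows: partition the interval into $m$ consecutive blocks of length $p$, and color the $j$-th block according to the tuple $S_{i_j}(r,k)$, where the index $i_j$ is chosen from the $(r-\lceil r/p\rceil)$ distinct indices guaranteed by Lemma~\ref{UnionSi}, with the specific choice among those indices determined by $\chi(j)$ (i.e., we use $\chi$ to decide which allowed block type sits in position $j$). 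Since $p \leq k$, each block has length $p$, and truncating to the first $p$ entries of each $S_i(r,k)$ is exactly what the definition provides.

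Next I would show this coloring of $\{1,\dots,pm\}$ is valid, i.e.\ contains no $k$-TMAP. Suppose $(x_1,\dots,x_k)$ is a monochromatic arithmetic progression of color $c$ with common difference $d$. The concatenated sequence $T$ of the chosen blocks is exactly of the form covered by Lemma~\ref{UnionSi}, so $p \mid d$. Write $d = pd'$. Now the key observation is that consecutive terms $x_i, x_{i+1}$ lie in blocks whose indices differ by exactly $d'$; more precisely, if $x_i$ lies in the $j$-th block then $x_{i+1}$ lies in the $(j+d')$-th block, and since all terms have the same residue mod $p$ (because $p \mid d$), every $x_i$ occupies the same position within its block. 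For a fixed position $s \in \{1,\dots,p\}$, the color appearing at position $s$ of block $j$ is determined by $\chi(j)$ via a fixed injective correspondence (different allowed block types place different colors in position $s$, since the $(r-\lceil r/p\rceil)$ chosen tuples are distinct and within a block each residue class appears at most once). Hence $\chi(j), \chi(j+d'), \chi(j+2d'), \dots, \chi(j+(k-1)d')$ must all produce color $c$ at position $s$, which by injectivity forces $\chi(j) = \chi(j+d') = \cdots = \chi(j+(k-1)d')$. That is exactly a $k$-TMAP in $\chi$ with common difference $d'$ — contradicting the validity of $\chi$, provided $d' \geq 1$; and $d' = 0$ is impossible since an arithmetic progression of length $k \geq 2$ has nonzero common difference. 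Therefore no $k$-TMAP exists, $w(r,k) > pm = p(w(r-\lceil r/p\rceil, k) - 1)$, and the recurrence follows.

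I expect the main obstacle to be making the "injective correspondence between block types and the color at a fixed position" argument airtight, especially in Case 2 where $r > p$ and an individual tuple $S_i(r,k)$ need not contain every color. There the statement is really that, among the $r - \lceil r/p\rceil$ surviving block types, the one occupying block $j$ is recoverable from the color-at-position-$s$ data together with the residue class $s \bmod p$ — and one must check that the specific set of excluded indices $\{1+pm : 0 \le m \le \lceil r/p\rceil - 1\}$ indeed makes this recovery well-defined, i.e.\ that a given color $c$ cannot appear at position $s$ in two different surviving blocks. This is essentially the content already extracted in the proof of Lemma~\ref{UnionSi} (each color's index positions land in at most $p-1$ residue classes, and within each block a color occupies a single residue class), so I would cite and lean on that lemma rather than re-derive it; the remaining work is just the bookkeeping translating "same color, same within-block position, block indices in arithmetic progression" into "$\chi$-values in arithmetic progression."
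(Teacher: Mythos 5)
Your proposal is correct and follows essentially the same route as the paper: blow up a valid $(r-\lceil r/p\rceil)$-coloring by replacing each colored element with one of the surviving blocks from Lemma~\ref{UnionSi}, use that lemma to force $p \mid d$, and then project a surviving $k$-TMAP down to a $k$-TMAP in the original coloring. Your explicit treatment of the injective correspondence between surviving block types and the color at a fixed within-block position is a slightly more careful rendering of the paper's assertion that all blocks meeting the progression must share a single block type, but the argument is the same.
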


\begin{proof}
By the definition of $w(r - \lceil \frac{r}{p} \rceil, k)$, there exists an $(r - \lceil \frac{r}{p} \rceil)$-coloring of $\{ 1, 2, \dots , w(r - \lceil \frac{r}{p} \rceil, k) -1 \}$ containing no $k$-TMAP.  We now ``blow up'' this coloring by replacing each color $i$ with the block $S_i(r, k)$; that is, we now have a coloring of
$$\left\{ 1, 2, \dots , p \cdot \left(w\left(r - \left\lceil \frac{r}{p}\right\rceil, k\right) -1\right)\right\},$$
where if before $c_i$ was the color of $i$, then now the colors of the integers
$$pi,pi+1,\dots,pi + p - 1$$
match the values of $S_{c_i}(r,k)$.  

We now invoke Lemma \ref{UnionSi} to see that the only $k$-TMAP that can exist in this coloring are those with common difference $d \equiv 0$ (mod $p$).  Suppose such an arithmetic progression $(x_1,x_2,\dots,x_k)$ has common difference $mp$ and is in color $c$.  Then we can see that any two terms of this arithmetic progression must be the same color and in the same index position as each other, in their respective blocks; in particular, this implies that all blocks containing an $x_i$ must be of the same block-type $S_{\ell}(r,k)$.  Consider now the progression $(x_1',x_2',\dots,x_k')$ in the original valid coloring of
$$\left\{ 1, 2, \dots , w\left(r - \left\lceil \frac{r}{p}\right\rceil, k\right) - 1\right\},$$
where $x_i'=t$ if $x_i$ was in the $t^{\text{th}}$ block in the blown up coloring.  Notice that $(x_1',x_2',\dots,x_k')$ is an arithmetic progression with common difference $m$, and is moreover monochromatic of color $\ell$.

This is a contradiction to our assumption that we originally chose an $(r - \lceil \frac{r}{p} \rceil)$-coloring of $\{ 1, 2, \dots , w(r - \lceil \frac{r}{p} \rceil, k) -1 \}$ containing no $k$-TMAP.
\end{proof}

Notice that when $p > r$ that each $S_i(r, k)$ contains at least $\lfloor \frac{r}{p} \rfloor$ elements of each color. This means that when we remove one of these block types, we remove at least $\lfloor \frac{r}{p} \rfloor$ index positions, and so we have no more than $p - \lfloor \frac{r}{p} \rfloor$ possible index positions that any color might be in. We can therefore make the recursion of Theorem \ref{MainThm} stronger by choosing a larger prime $p$ such that $p - \lfloor \frac{r}{p} \rfloor < k$. This means that when we have blocks of this larger size $p$ and remove one block type, we are still left with fewer than $k$ possible index positions for elements to be in, ensuring that the proof of Theorem \ref{MainThm} still holds.

\begin{corollary}\label{Construction}

Fix $k$ and let $p$ be the largest prime less than or equal to $k$.  For $r \in \{1,2,3,4,\dots,p\}$ we can recursively construct $r$-colorings of size 
$$
p^{r-1}(k-1)
$$
containing no $k$-TMAPs.  
\end{corollary}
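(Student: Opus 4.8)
The plan is to induct on $r$, with the inductive step being exactly the ``blow-up'' construction from the proof of Theorem~\ref{MainThm}, specialized to the range $r \le p$, in which $\lceil r/p\rceil = 1$ so that only one block type is discarded at each stage. For the base case $r = 1$, take the constant coloring of $\{1,2,\dots,k-1\}$: it has size $p^{0}(k-1) = k-1$, and it contains no $k$-TMAP since any $k$-term arithmetic progression with common difference at least $1$ has largest term at least $k$, which lies outside $\{1,\dots,k-1\}$.

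For the inductive step, suppose an explicit $(r-1)$-coloring $\chi$ of $\{1,2,\dots,p^{r-2}(k-1)\}$ using colors $\{1,\dots,r-1\}$ and containing no $k$-TMAP has already been built. Define an $r$-coloring of $\{1,2,\dots,p^{r-1}(k-1)\}$ by replacing, for each $t$, the cell $t$ (which has color $\chi(t) \in \{1,\dots,r-1\}$) with the length-$p$ block $S_{\chi(t)+1}(r,k)$, so that the cells $p(t-1)+1,\dots,p(t-1)+p$ read off $S_{\chi(t)+1}(r,k)$ in order. Two bookkeeping points deserve a check: the block types that occur are precisely $S_2(r,k),\dots,S_r(r,k)$, so $S_1(r,k)$ is avoided, which is the hypothesis of Case~1 of Lemma~\ref{UnionSi}; and since $r \le p$, each such block has length $p \ge r$ and hence contains every color of $\{1,\dots,r\}$, so the new coloring genuinely uses all $r$ colors. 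Its size is $p \cdot p^{r-2}(k-1) = p^{r-1}(k-1)$, as required.

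That the new coloring contains no $k$-TMAP is precisely the argument already carried out in the proof of Theorem~\ref{MainThm}: by Lemma~\ref{UnionSi} any $k$-TMAP in the new coloring has common difference divisible by $p$; for such a progression, moving through $mp$ cells returns to the same position within a block, so being monochromatic forces all its terms to occupy the same index position inside blocks of one common type $S_\ell(r,k)$; projecting each term to the index of the block containing it then yields a $k$-term arithmetic progression that is monochromatic of color $\ell-1$ in $\chi$, contradicting the inductive hypothesis. Every step here goes through verbatim once $\lceil r/p\rceil = 1$, so the induction closes and the construction is recursive and explicit.

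There is no genuinely hard step: the combinatorial content lives entirely in Lemma~\ref{UnionSi} and Theorem~\ref{MainThm}, which may be quoted. The only things needing care are the bookkeeping items noted above — arranging for the multiplicative constant to come out as exactly $k-1$, which is what forces the base case to be the trivial coloring of $\{1,\dots,k-1\}$ rather than some stronger seed, and keeping the relabeling $\chi(t)\mapsto\chi(t)+1$ consistent between consecutive levels so the recursion composes. If desired, the result can be unwound into a closed-form rule reading the color of a cell off a mixed-radix expansion of the cell index, but the recursive description above is the most direct path to the statement.
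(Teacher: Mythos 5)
Your proof is correct and follows essentially the same route as the paper: start from the trivial monochromatic coloring of $\{1,\dots,k-1\}$ and repeatedly blow up by the $S_i(r,k)$ blocks, invoking Lemma~\ref{UnionSi} and the argument of Theorem~\ref{MainThm} at each stage. Your explicit relabeling $\chi(t)\mapsto\chi(t)+1$ so that $S_1(r,k)$ is the excluded block is a welcome piece of bookkeeping that the paper's own proof leaves implicit.
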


\begin{proof}
Begin with $k-1$ consecutive monochromatically colored elements, and then replace each element with an $S_i(2, k)$ tuple. Since our original coloring did not contain a $k$-TMAP, Lemma \ref{UnionSi} tells us that our new construction will likewise contain no $k$-TMAPs. We then likewise replace each of the two colors in the new construction with their corresponding $S_i(3, k)$ tuples. Continue replacing elements with the next tuple until you have replaced elements with $S_i(r, k)$ tuples. We know that the size of every tuple is $p$ and we recursively replace elements  with tuples from the constructed colorings $r-1$ times.
\end{proof}

\begin{theorem}\label{mainthemsbound}
If $p$ is any prime with $2 \leq r \leq p \leq k$, then
$$
w(r, p+1) > p^{r-1}2^p.
$$
This generalizes Berlekamp's theorem \cite{BKamp}.
\end{theorem}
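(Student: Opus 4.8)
The plan is to induct on $r$, keeping $k$ fixed at $p+1$ throughout, with Berlekamp's theorem serving as the base case and the recurrence of Theorem~\ref{MainThm} driving the inductive step. (Only the hypothesis $2\le r\le p$, with $p$ prime, is needed — the asserted bound does not involve $k$.) Concretely, I want to show that $w(r,p+1)>p^{r-1}2^{p}$ for every prime $p$ and every $r$ with $2\le r\le p$.

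For the base case $r=2$, the claim reads $w(2,p+1)>p\cdot 2^{p}$, which is exactly Berlekamp's theorem~\cite{BKamp}. I would observe that the case $p=2$ is entirely absorbed here, since $2\le r\le p=2$ forces $r=2$; consequently the recursion is only ever invoked with $p\ge 3$.

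For the inductive step, I would fix $r$ with $3\le r\le p$ (so $p\ge 3$) and assume $w(r-1,p+1)>p^{r-2}2^{p}$. To apply Theorem~\ref{MainThm} with $k=p+1$, I first check that the prime it refers to — the largest prime at most $p+1$ — is $p$ itself: since $p\ge 3$ is an odd prime, $p+1\ge 4$ is even, hence composite, so there is no prime in $(p,p+1]$ other than $p$. Theorem~\ref{MainThm} then yields
$$w(r,p+1)>p\left(w\left(r-\left\lceil\tfrac{r}{p}\right\rceil,\,p+1\right)-1\right).$$
Since $2\le r\le p$ gives $0<r/p\le 1$, we have $\lceil r/p\rceil=1$, so the recurrence collapses to $w(r,p+1)>p\,(w(r-1,p+1)-1)$. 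As $2\le r-1\le p$, the induction hypothesis applies, and because $w(r-1,p+1)$ and $p^{r-2}2^{p}$ are positive integers the strict inequality $w(r-1,p+1)>p^{r-2}2^{p}$ becomes $w(r-1,p+1)-1\ge p^{r-2}2^{p}$. Substituting, $w(r,p+1)>p\cdot p^{r-2}2^{p}=p^{r-1}2^{p}$, closing the induction.

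I do not expect a real obstacle — this is a short induction — but two small points need care. The first is bookkeeping about which prime governs Theorem~\ref{MainThm} at $k=p+1$: it is $p$ only because $p+1$ is composite for $p\ge3$, which is also why $p=2$ must be dispatched by the base case (at $k=3$ the governing prime is $3$, not $2$) rather than by the recursion. The second is the integrality step: one must convert the strict bound on $w(r-1,p+1)$ into the integer bound $w(r-1,p+1)-1\ge p^{r-2}2^{p}$ before multiplying through by $p$, or strictness is lost. I would also remark that the proof is constructive in the sense stressed elsewhere in the paper: iterating the block substitution of Theorem~\ref{MainThm} (as in Corollary~\ref{Construction}), but seeded with Berlekamp's explicit valid $2$-coloring of $\{1,\dots,p2^{p}\}$ instead of with $k-1$ monochromatic points and applying the $S_i(\cdot,p+1)$ substitution $r-2$ times, multiplies the length by $p$ at each step and preserves the absence of a $(p+1)$-TMAP by Lemma~\ref{UnionSi}, producing an explicit valid $r$-coloring of $\{1,\dots,p^{r-1}2^{p}\}$.
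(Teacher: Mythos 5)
Your proof is correct and takes essentially the same route as the paper: Berlekamp's bound $w(2,p+1)>p\cdot 2^{p}$ as the base case, with the recurrence of Theorem~\ref{MainThm} (which collapses to $w(r,p+1)>p\,(w(r-1,p+1)-1)$ since $\lceil r/p\rceil=1$ for $r\le p$) applied repeatedly. Your write-up is in fact more careful than the paper's two-line proof, which silently omits the checks you supply — that the governing prime for $k=p+1$ is $p$ itself because $p+1$ is composite for $p\ge 3$, that $p=2$ is absorbed by the base case, and the integrality step needed to keep the inequality strict after multiplying by $p$.
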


\begin{proof}
Berlekamp's result states that $w(2, p+1) > p2^p$. This implies that there is a valid 2-coloring of $\left\{1,2,3,\dots,p2^p\right\}$ that contains no $k$-TMAPs. By invoking Corollary \ref{MainThm}, we can use Berlekamp's result as a base case for constructing valid $r$-colorings for larger $r$.
\end{proof}

\section{Comparing to Previously Known Bounds}

Note that any known valid 2-coloring can be used to create valid $r$-colorings by using the construction method found in the proof of Corollary \ref{Construction}.  When $r > p$ the recurrence in Theorem \ref{MainThm} gives worse and worse bounds: When $r \in \{1,2,\dots,p\}$ we get a new factor of $p$ each time we increment $r$; when $r \in \{p+1,p+2,\dots,2p\}$ we only get a new factor of $p$ every other time; in general, when $r \in \{\ell\cdot p + 1, \ell\cdot p+2,\dots,(\ell+1)\cdot p\}$, we on average get a new factor of $p$ every one $\ell^{\text{th}}$ of the time.

Indeed, the growth rate of the number of factors of $p$ can be seen to be proportional to that of harmonic numbers.  In particular, if one writes $r = \ell\cdot p + s$ where $s \in \{0,1,2,\dots,p-1\}$ and $H_{\ell} = \sum_{i=1}^{\ell} \frac{1}{i}$, then one can show that, for fixed $k$ and $p$ and $r \to\infty$,
$$w(r,k) \gtrapprox p^{p\cdot H_{\ell}+\frac{s}{\ell +1}}(p2^p) \sim c\cdot p^{p\ln\frac{r}{p}}2^p,$$
for some constant $c$.  Moreover $w(r,k) > p^{p\ln\frac{r}{p}}2^p$.  Note that when $r \geq k$ the bound in Theorem \ref{MainThm} beats out Kozik and Shabanov's bound \cite{shab},  which was the best known when $r$ and $k$ are similar in size.



Ours stops being best when O'Bryant and Moser-type bounds take over.

\section{Bounds on Small Van Der Waerden Numbers}

There are only seven known van der Waerden numbers, showing just how difficult these problems are, both theoretically and computationally.  Indeed, the last two numbers found, that $w(6,2) = 1,132$ \cite{Kouril} and $w(4,3) = 76$ \cite{beeler}, were discovered using SAT solvers and computers specifically designed for this task.  Another extensive computational effort used the Berkeley Open Infrastructure For Network Computing to distribute the work to 516 volunteers' 1,760 computers in 53 countries, totaling two teraflops of computing power for a full year.  \cite{Monroe}

In Figure \ref{smallvdwnumbers} we list the best known lower bounds for small values of $k$ and $r$.  The numbers in bold are new or have been improved by Theorem \ref{MainThm}.

\begin{figure}[h]
\scalebox{0.75}{
$\begin{array}{ | l | l | l | l | l | l | l | l | l | l | l}
\hline
	k / r & 2 \text{ Colors} & 3 \text{ Colors} & 4 \text{ Colors} & 5 \text{ Colors} & 6 \text{ Colors}\\ \hline
	3-\text{Term} & 9 & 27 & 76 & >170 &\bf  >225  \\ \hline
	4-\text{Term} & 35 & 293 & >1,048 & >2,254 & >9,778 \\ \hline
	5-\text{Term} & 178 & >2,173 & >17,705 & >98,740 & >98,748  \\ \hline
	6-\text{Term} & 1132 & >11,191 & >91,331 & >540,025 & >816,981  \\ \hline
	7-\text{Term} & >3,703 & >48,811 & >420,217 & >\bf 2,941,519 &\bf  >20,590,633  \\ \hline
	8-\text{Term} & >11,495 & >238,400 & >2,388,317 & >\bf 16,718,219 &\bf  >117,027,533  \\ \hline
	9-\text{Term} & >41,265 & >932,745 & >10,898,729 & >79,706,009 &\bf  >557,942,063   \\ \hline
	10-\text{Term} & >103,474 & >4,173,724 & >76,049,218 & >542,694,970  &\bf  >3,798,864,790  \\ \hline
	11-\text{Term} & >193,941 & >18,603,731 & >329,263,781 & >\bf 3,621,901,591 &\bf  >39,840,917,501  \\ \hline
	12-\text{Term} & >638,727 & >79,134,144 & >1,536,435,264 & >\bf 16,900,787,904 &\bf  >185,908,666,944  \\ \hline
	13-\text{Term} & >1,642,309 & >251,282,317 & >5,683,410,589 & >\bf 73,884,37,657 &\bf  >960,496,389,541 \\ \hline
\end{array}$
}

\vspace{.5cm}

\scalebox{0.75}{
$\begin{array}{ | l | l | l | l | l | l | l | l | l | l | l}
\hline
	k / r & 7 \text{ Colors} & 8 \text{ Colors} & 9 \text{ Colors} \\ \hline
	3-\text{Term} &\bf  >225 &\bf  >510 &\bf  >775  \\ \hline
	4-\text{Term} &\bf  >9,940 &\bf  >29,334 &\bf  >29,334  \\ \hline
	5-\text{Term} &\bf  >493,700 &\bf  >493,740 &\bf  >2,468,500  \\ \hline
	6-\text{Term} &\bf  >2,700,125 &\bf  >4,084,905 &\bf  >13,500,625  \\ \hline
	7-\text{Term} &\bf  >144,134,431 &\bf  >144,134,431 &\bf  >1,008,941,017  \\ \hline
	8-\text{Term} &\bf  >819,192,732 &\bf  >819,192,732 &\bf  >5,734,349,124  \\ \hline
	9-\text{Term} &\bf  >3,905,594,441 &\bf  >3,905,594,441 &\bf  >27,339,161,087  \\ \hline
	10-\text{Term} &\bf  >26,592,053,530 &\bf  >26,592,053,530 &\bf  >186,144,374,710  \\ \hline
	11-\text{Term} &\bf  >438,250,092,511 &\bf  >4,820,751,017,621 &\bf  >53,028,261,193,831  \\ \hline
	12-\text{Term} &\bf  >2,044,995,336,384 &\bf  >22,494,948,700,224 &\bf  >247,444,435,703,464   \\ \hline
	13-\text{Term} &\bf  >12,486,453,064,033 &\bf  >162,323,889,832,429 & \bf >2,110,210,567,821,577 \\ \hline
\end{array}$
}
\caption{Small van der Waerden numbers}\label{smallvdwnumbers}
\end{figure}

\newpage

\section{Concluding Remarks}

In the proof of Theorem \ref{MainThm}, our construction of the $S_i(r, k)$ had the property that no two consecutive elements are ever the same color. This implies that in any coloring that is built from the $S_i(r, k)$ tuples, there can only ever be two consecutive terms that are the same color, which would occur only between the ends of the consecutive blocks. Thus our construction may be applicable to Ramsey problems which demand that a coloring avoids many consecutive monochromatic colored terms, such as the problem introduced by Graham in \cite{Graham}.

\section{Acknowledgments}

The authors would like to thank Craig Timmons for his helpful comments which improved this paper.  We would also like to thank the referees whose helpful comments improved the quality of this paper.




\end{document}